\tikzset{snake it/.style={decorate, decoration=snake}}
\numberwithin{equation}{section}
\crefname{equation}{Eq.}{Eqs.}
\crefname{eqnarray}{Eq.}{Eqs.}
\crefname{algo}{Algorithm}{Algorithms}
\crefname{conj}{Conjecture}{Conjectures}
\crefname{lem}{Lemma}{Lemmas}
\crefname{thm}{Theorem}{Theorems}
\crefname{claim}{Claim}{Claims}
\crefname{rmk}{Remark}{Remarks}
\crefname{prop}{Proposition}{Propositions}
\crefname{section}{Section}{Sections}
\crefname{appendix}{Appendix}{Appendices}
\crefname{cor}{Corollary}{Corollaries}
\crefname{figure}{Figure}{Figures}
\crefname{table}{Table}{Tables}
\crefname{example}{Example}{Examples}
\crefname{prob}{Problem}{Problems}
\crefname{assm}{Assumption}{Assumptions}
\crefname{defn}{Definition}{Definitions}
\newcommand{\bbA}{\mathbb{A}}
\newcommand{\bbZ}{\mathbb{Z}}
\newcommand{\bbC}{\mathbb{C}}
\newcommand{\bbP}{\mathbb{P}}
\newcommand{\bbF}{\mathbb{F}}
\def\bary{\begin{array}} 
\def\eary{\end{array}} 
\def\ben{\begin{enumerate}} 
\def\een{\end{enumerate}}
\def\bit{\begin{itemize}} 
\def\eit{\end{itemize}}
\def\beq{\begin{equation}}                     %
\def\eeq{\end{equation}}                       %
\def\bea{\begin{eqnarray}}                     
\def\eea{\end{eqnarray}}
\def\bary{\begin{array}} 
\def\eary{\end{array}} 
\def\ben{\begin{enumerate}} 
\def\een{\end{enumerate}}
\def\bit{\begin{itemize}} 
\def\eit{\end{itemize}}
\def\IP{{\mathbb P}}
\theoremstyle{plain}
\newtheorem{thm}{Theorem}[section]
\newtheorem{prop}[thm]{Proposition}
\newtheorem*{conj*}{Conjecture}
\newtheorem*{cor*}{Corollary}
\theoremstyle{definition}
\newtheorem{rem}[thm]{Remark}
\newtheorem*{rem*}{Remark}
\newtheorem*{rems*}{Remarks}
\newtheorem{example}{Example}[section]
\newcommand{\GITl}[1]{\backslash \!\! \backslash _{\kern-.2em #1 \kern0.1em}}
\newcommand{\GIT}[1]{/\!\!/_{\kern-.2em #1 \kern0.1em}}
\newcommand{\ev}{\operatorname{ev}}
\def\bred{\begin{color}{red}}
\def\ered{\end{color}}
\def\bes{\begin{subequations}}
\def\ees{\end{subequations}}
\newcommand\F{\mathbb F}
\newcommand\PP{\mathbb P}
\newtheorem{theorem}{Theorem}[section]
\newtheorem{lemma-definition}[theorem]{Lemma-Definition}
\newtheorem{proposition}[theorem]{Proposition}
\theoremstyle{definition}
\newtheorem{defn}[theorem]{Definition}
\newtheorem{convention}[theorem]{Convention}
\theoremstyle{remark}
\numberwithin{equation}{section}
\numberwithin{figure}{section}
\newcommand {\shO}  {\mathcal{O}}
\newcommand {\shX}  {\mathcal{X}}
\newcommand {\ol} {\overline}
\DeclareMathOperator {\hhh} {H}
\newcommand{\sstyle}{\scriptstyle}
\def\mydate{\ifcase\month \or January\or February\or March\or
April\or May\or June\or July\or August\or September\or October\or 
November\or December\fi \space\number\day,\space\number\year}
\newcommand{\ptwo}{\IP^2}
\DeclareMathOperator{\mmm}{M}
\newcommand{\wt}{\widetilde}
\DeclareMathOperator{\ftwo}{\mathbb{F}_2}
\begin{document}

\title{
  Stable maps to Looijenga pairs built from the plane
  }

\author{Michel van Garrel}

\address{\tiny University of Birmingham, School of Mathematics, B15 2TT, Birmingham, United Kingdom}
\email{m.vangarrel@bham.ac.uk}

\begin{abstract}
Choosing a normal crossings anticanonical divisor of $\bbP^2$ leads to four log Calabi--Yau surfaces, three of which are Looijenga pairs. 
In this survey article, I describe how to count rational curves in these that intersect each component of the divisor in one point of maximal tangency.
\end{abstract}

\maketitle
\setcounter{tocdepth}{1}
\tableofcontents

\section{Introduction}

A log Calabi--Yau surface $(Y,D)$ is a surface $Y$ with a choice of normal crossings divisor $D$ such that the complement is Calabi--Yau. $(Y,D)$ is a Looijenga pair if in addition, $D$ has a dimension 0 stratum. Starting with $Y=\bbP^2$, $D$ is of degree 3 and there are 4 ways of building a log Calabi--Yau surface:  $D=D^{\rm toric}$, the toric divisor given as a union of 3 lines; $D=D_1+D_2$, the union of a line $D_1$ and a conic $D_2$ not tangent to $D_1$; $D=D_3$ a nodal cubic; and $D=E$ a smooth cubic. All but the last are Looijenga pairs.

\begin{figure}[h]
\includegraphics[scale=0.2]{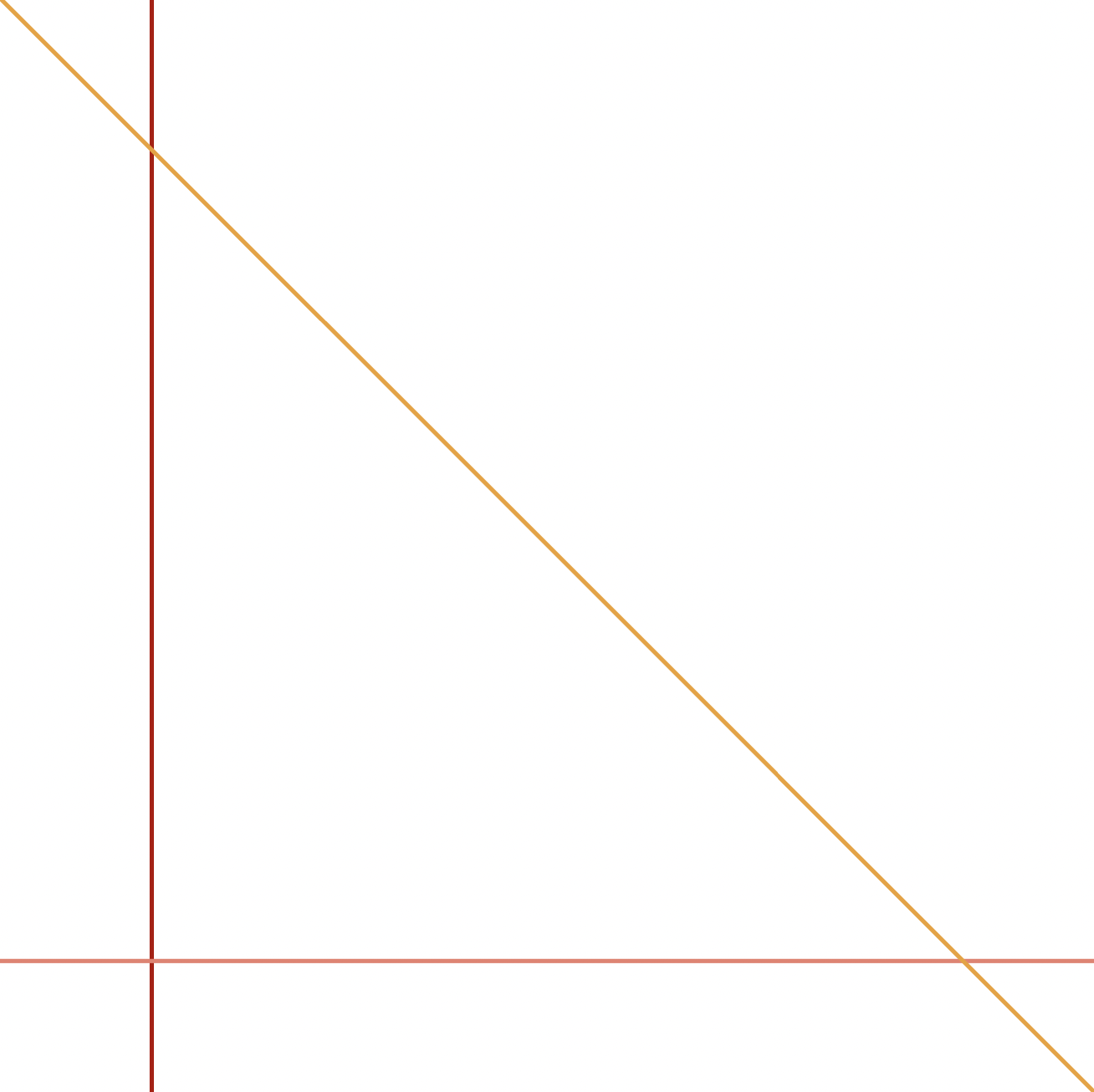}\hspace{20mm}\includegraphics[scale=0.2]{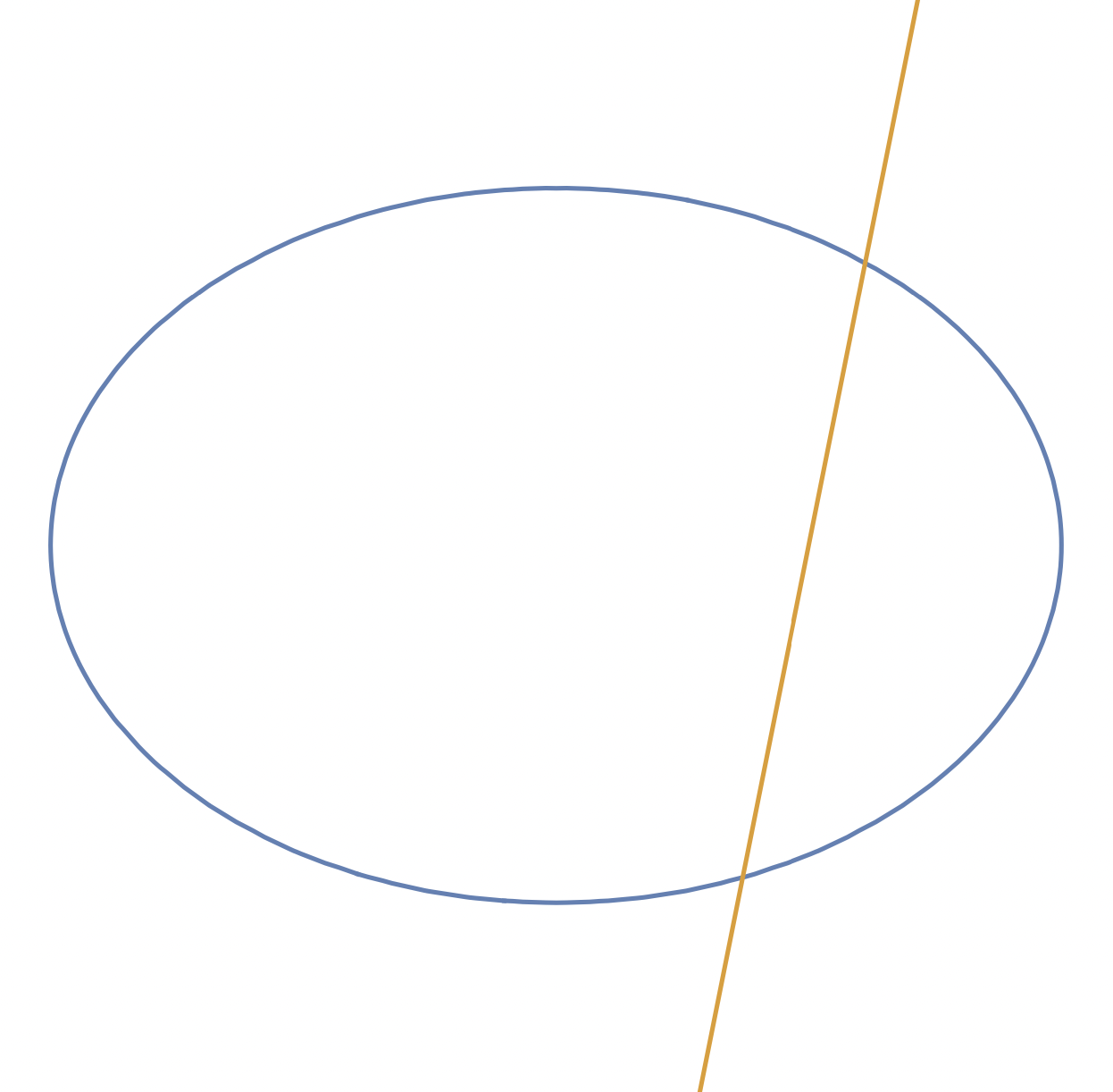}
\caption{The toric divisor $D^{\rm toric}$ and the divisor $D_1+D_2$ formed of a line and a conic.}
\end{figure}

\begin{figure}[h]
\includegraphics[scale=0.2]{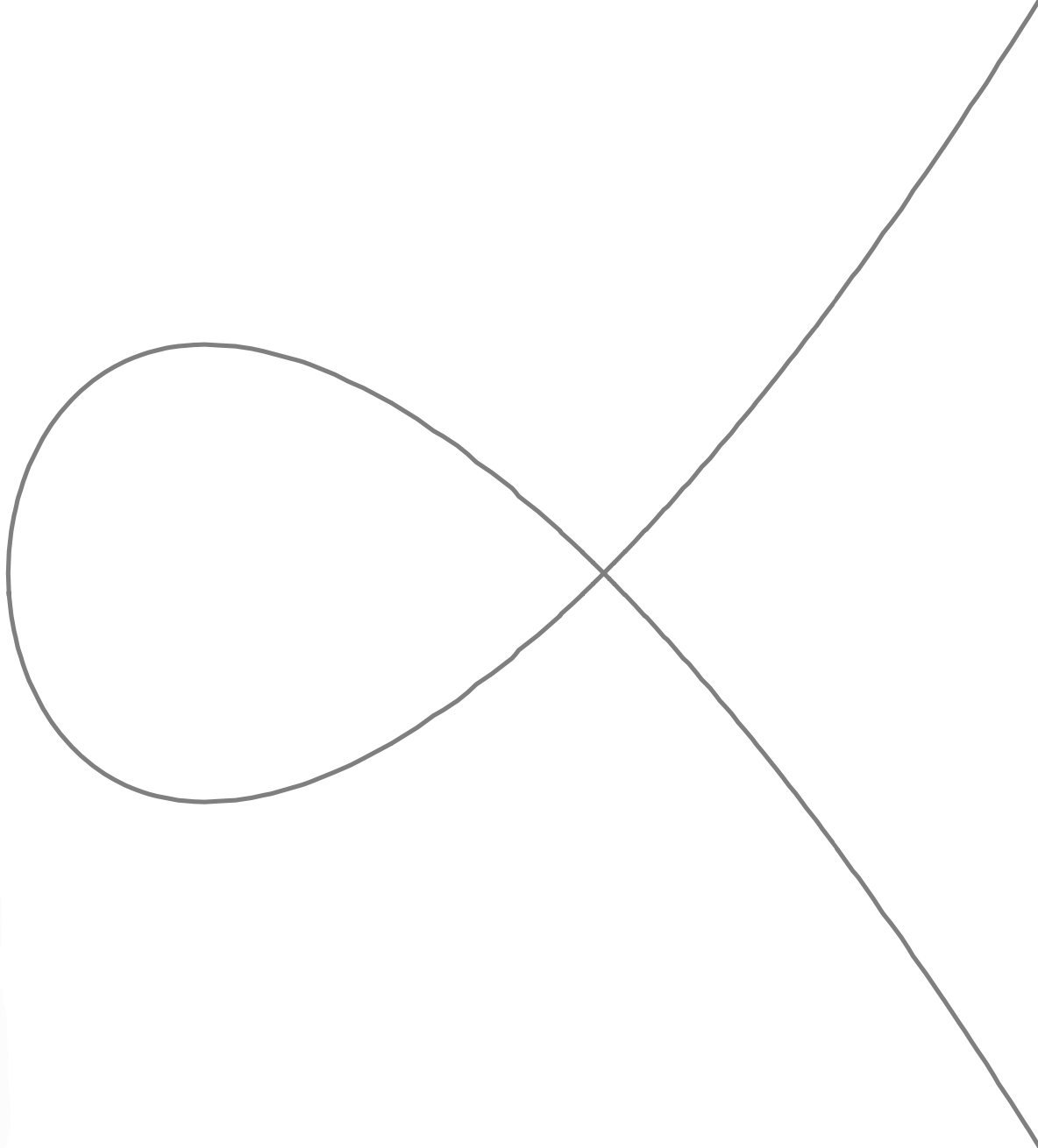}\hspace{20mm}\includegraphics[scale=0.2]{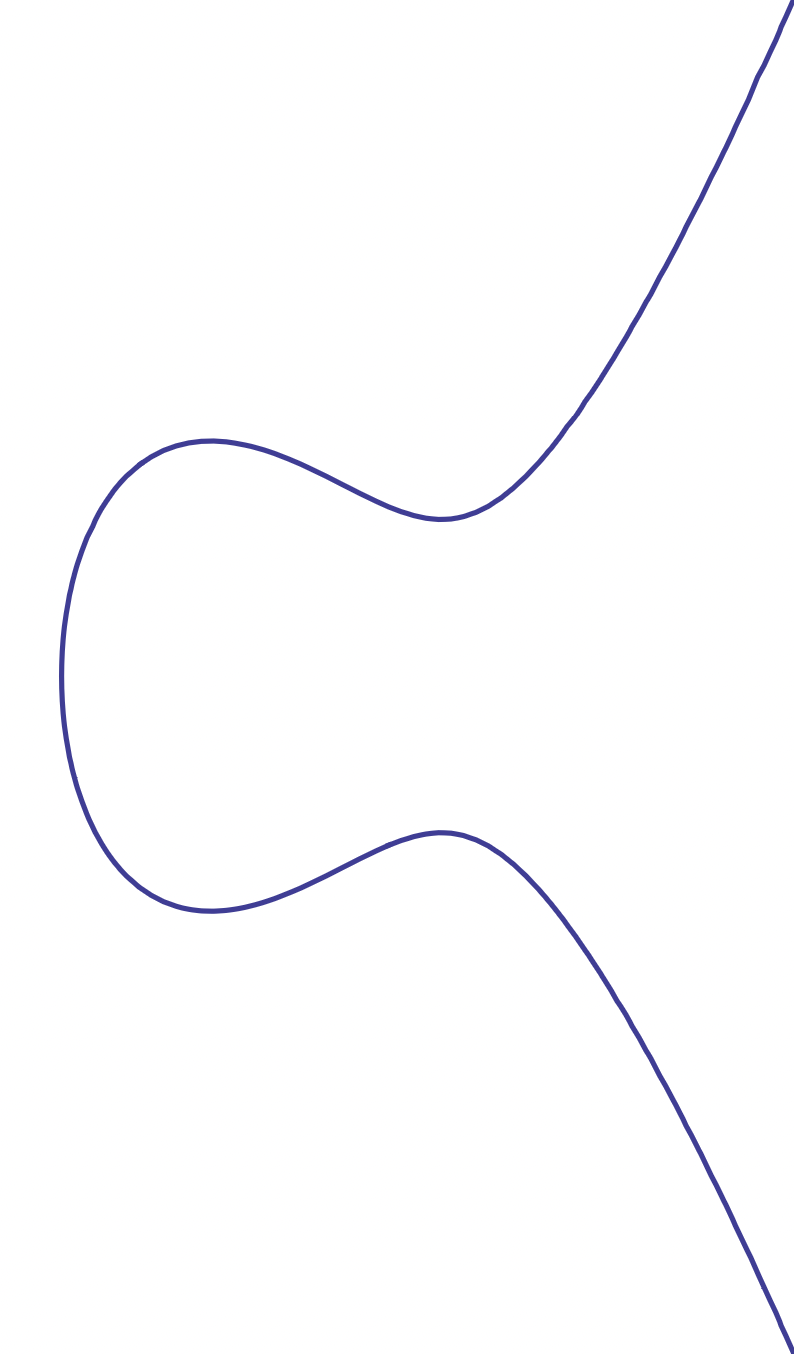}
\caption{The divisors formed by resp.\ a nodal cubic $D_3$ and a smooth cubic $E$.}
\end{figure}

Denote by $l\leq3$ the number of irreducible components of $D$ and let $\bbP^1[l]$ be the complement of $l$ points in $\bbP^1$. Up to automorphism, we have that $\bbP^1[1]=\bbA^1$, $\bbP^1[2]=\bbC^*$ and $\bbP^1[3]=\bbC^*\setminus\{1\}$.
A surprisingly intricate question is to study the geometry of $Y\setminus D$ by counting pseudo-holomorphic maps
\[
\bbP^1[l]\longrightarrow Y \setminus D
\]
of degree $d\in\bbZ_{>0}$ that pass through $l-1$ fixed points. For example, in degree 1 and for $D=E$, there are 9 such $\bbA^1$-curves corresponding to the 9 inflection lines of $E$.

The modern formulation of this count is as the genus 0 maximal tangency log Gromov--Witten invariant $N_d(\bbP^2,D)$ \cite{Chen14,AbramChen14,GS13}. These invariants, more generally the punctured invariants of \cite{ACGSpunct}, play a crucial role in the mirror constructions of intrinsic mirror symmetry \cite{GSintr,GScan}.
In this survey, I describe the methods used to compute the $N_d(\bbP^2,D)$. For the case of $D=D_1+D_2$, I provide a full proof, which may serve as an extensive introduction to the scattering diagram methods of \cite{BBvG2}, see also \cite{B} for an introduction emphasing the links with physics.

For the toric $(\bbP^2,D^{\rm toric})$, we can make use of tropical correspondence results \cite{Mikh05,NiSi,ManRu}, which turn $N_d(\bbP^2,D^{\rm toric})$ into a count of weighted tropical curves.
A special case of \cite[Theorem 3.2]{BBvG1} then states that
\begin{equation}
\label{solution1}
N_d(\bbP^2,D^{\rm toric})=d^2,
\end{equation}
reproven in Proposition \ref{prop1} below.

In the case of a line and a conic, $N_d(\bbP^2,D_1+D_2)$ is obtained by degeneration.
The result of \cite[Example 3.1]{BBvG2}, or Proposition \ref{prop2} below, is that
\begin{equation}
\label{solution2}
N_d(\bbP^2,D_1+D_2)=\binom{2d}{d}.
\end{equation}

The invariants $N_d(\bbP^2,D_3)$ are encoded by the wall-crossing function of the central ray of a local scattering diagram. The terms of this function are conjectured in \cite[Equation (1.3)]{GP10}, which is proven in \cite{Rei11}.
Applying \cite{GPS}, the invariants are given by the equality of power series expansions
\begin{equation}
\label{solution3}
\sum_{d=1}^\infty \, d \, N_d(\bbP^2,D_3) \, x^{d} = 3 \log\left( \sum_{k=0}^\infty \, \frac{1}{3k+1} \, \binom{4k}{k} \, x^{k} \, \right).
\end{equation}

The structure of the invariants of $(\PP^2,E)$ carries similarities with the structure of invariants of K3 surfaces \cite{CGKT2,Bousseau:2020ckw} and thus sometimes $(\PP^2,E)$ is called a log K3 surface.
The computation of the $N_d(\PP^2,E)$ is outside of the scope of this survey. It has a long and rich history. It was studied for the first time in the landmark \cite{MR1844627}, who stated and provided evidence for 3 influential conjectures regarding $(\bbP^2,E)$. The first one is the first appearance of the \emph{log/local correspondence}, which expresses the $N_d(\bbP^2,E)$ as the genus 0 local Gromov--Witten invariants of the local surface given as the total space ${\rm Tot}\left(\shO_{\bbP^2}(-E)\right)$. This conjecture was first proven in \cite{MR1962055}, subsequently generalised in \cite{vGGR}, and then extended in \cite{BBvG1,BBvG2,BBvG3,Bousseau:2020ckw,tseng2020mirror,BNTY,NR22}. The second conjecture is a BPS version of the log/local correspondance for $(\bbP^2,E)$, that is, a correspondence between the integer-valued counts that govern the genus 0 log and local theories. It was proven through the succession of works \cite{CPS,Gra,Bou19a,Bou19b}.

The article \cite{MR1844627} also introduces log mirror mirror symmetry of $(\bbP^2,E)$. The author builds a mirror family and computes the periods on the mirror family. The third conjecture then states that the second order period written in canonical coordinates is a generating function of the $N_d(\bbP^2,E)$. Building on \cite{CPS,Gra,GScan}, a geometric proof of this prediction is given in all dimensions in the upcoming work \cite{vGRS}, see also \cite{vG} for a self-contained proof for the case of $(\bbP^2,E)$.

\subsection{Methods}

The approach I take in this survey is to degenerate to \emph{toric models} for the Looijenga pairs. These are other -- \emph{toric} -- Loiijenga pairs $(\ol{\bbP^2},\ol{D})$ such that $(\bbP^2,D)$ is a logarithmic modification of a blow-up of $(\ol{\bbP^2},\ol{D})$ in smooth points of $\ol{D}$. Concretely this is a diagram
\[ 
\xymatrix{
 & (\wt{\bbP^2},\wt{D}) \ar[ld]_\varphi \ar[rd]^\pi & \\
(\bbP^2,D) & &  (\ol{\bbP^2},\ol{D})
}
\]
such that $\varphi$ is a sequence of blow-ups in singular points of (the total transform of) $D$ and $\pi$ is a toric model, i.e.\ a sequence of blow-ups in smooth points of (the proper transform of) $\ol{D}$.
By \cite[Proposition 1.3]{GHKlog} such toric models always exist. By \cite{AW} $\varphi$ preserves the log Gromov--Witten invariants.

The next step is to perform a sequence of standard degenerations to $(\wt{\bbP^2},\wt{D})$. Applying the degeneration formula \cite{MR1938113,zbMATH07283066,KLR,AF16,Chendeg}, the outcome is that the invariants $N_d(\bbP^2,D)$ are computed in terms of the log Gromov--Witten invariants of $(\ol{\bbP^2},\ol{D})$, which are calculated by tropical correspondence, and the log Gromov--Witten invariants of some standard pieces, which are known. This very general algorithm was formalised in terms of scattering diagram algorithms in \cite{GPS}.

One goal of this note is to unwrap the constructions of \cite{GPS,GHKlog,GHS16} for the examples of $(\bbP^2,D_1+D_2)$ and $(\bbP^2,D_3)$.
For a full treatment, the reader is invited to consult \cite{BBvG2}.

There are other approaches to computing these invariants, all interesting in their own right. Working at the level of moduli spaces these include \cites{vGGR,MR3228298,CGKT1,CGKT2,CGKT3,BN20,NR22} and working with Givental-style mirror symmetry \cites{BN21,tseng2020mirror,BNTY,Y22}.

\addtocontents{toc}{\protect\setcounter{tocdepth}{0}}
\section*{Acknowledgements}
\addtocontents{toc}{\protect\setcounter{tocdepth}{1}}

I thank Dhruv Ranganathan for originally suggesting to study the example of $D_1+D_2$ leading to \cite{BBvG2,BBvG3}. I thank the Nottingham Online Algebraic Geometry Seminar, in particular Al Kasprzyk and Livia Campo, for giving me the opportunity to present these results. I thank my collaborators Pierrick Bousseau, Andrea Brini, Jinwon Choi, Navid Nabijou, Helge Ruddat, Yannik Schüler and Bernd Siebert for various discussions linking to several aspects of this paper. I thank Fenglong You for discussions about parallel methods for computing these invariants and Tim Gräfnitz for discussions on \cite{Gra}.

\section{Genus 0 log Gromov-Witten invariants of maximal tangency}

For each Looijenga pair $(\bbP^2,D)$, endow $\bbP^2$ with the divisorial log structure determined by $D$. The log structure is used as a combinatorial tool to impose tangency conditions along the components of $D$. For a map
\begin{equation}\label{stablelogmap}
f : \bbP^1 \longrightarrow \bbP^2
\end{equation}
this is achieved by requiring that the pullback map on log structures at marked points is given by multiplication by the tangency imposed at that point. Compactifying the space of maps as in \eqref{stablelogmap},
\cite{GS13,Chen14,AbramChen14} construct the moduli space $\ol{\mmm}^{\log}_{m}(\bbP^2,D,d)$ of genus 0 maximally tangent basic stable log maps with $m$ marked points, and its virtual fundamental class
\[
[\ol{\mmm}^{\log}_{m}(\bbP^2,D,d)]^{\rm vir}\in\hhh_{2(l+m-1)}(\ol{\mmm}^{\log}_{m}(\bbP^2,D,d)).
\]
I suppress in the notation the $l$ marked points that carry a tangency and keep track of the $m$ interior points that do not carry a tangency condition. The notion of basicness/minimality selects a universal log structure needed for algebraicity and compactness of the moduli space.

For $j=1,\dots,m$, the moduli space admits evaluation maps at the $j$th marked points
\[
\ev_j : \ol{\mmm}^{\log}_{m}(\bbP^2,D,d) \longrightarrow \bbP^2.
\]
Imposing passing through a fixed point in $\bbP^2$ corresponds to capping the virtual fundamental class with $\ev_j^*([{\rm pt}])$. Doing this $l-1$ times leads to the invariant
\[
N_{d}(\bbP^2,D):=\int_{[\ol{\mmm}^{\log}_{l-1}(\bbP^2,D,d)]^{\rm vir}}  \, \prod_{j=1}^{l-1} \ev_j^*([{\rm pt}]).
\]
Other insertions may be imposed. For example, for the class toric varieties for which each toric divisor is nef, \cite{BBvG1} computes all invariants with point and psi classes.

\begin{example} Fixing a point $P$, there are two lines in the plane that pass through $P$, intersect $D_1$ in one point of order 1 and intersect $D_2$ in one point of order 2, see Figure \ref{lines}.

\begin{figure}[h]
\includegraphics[scale=0.3]{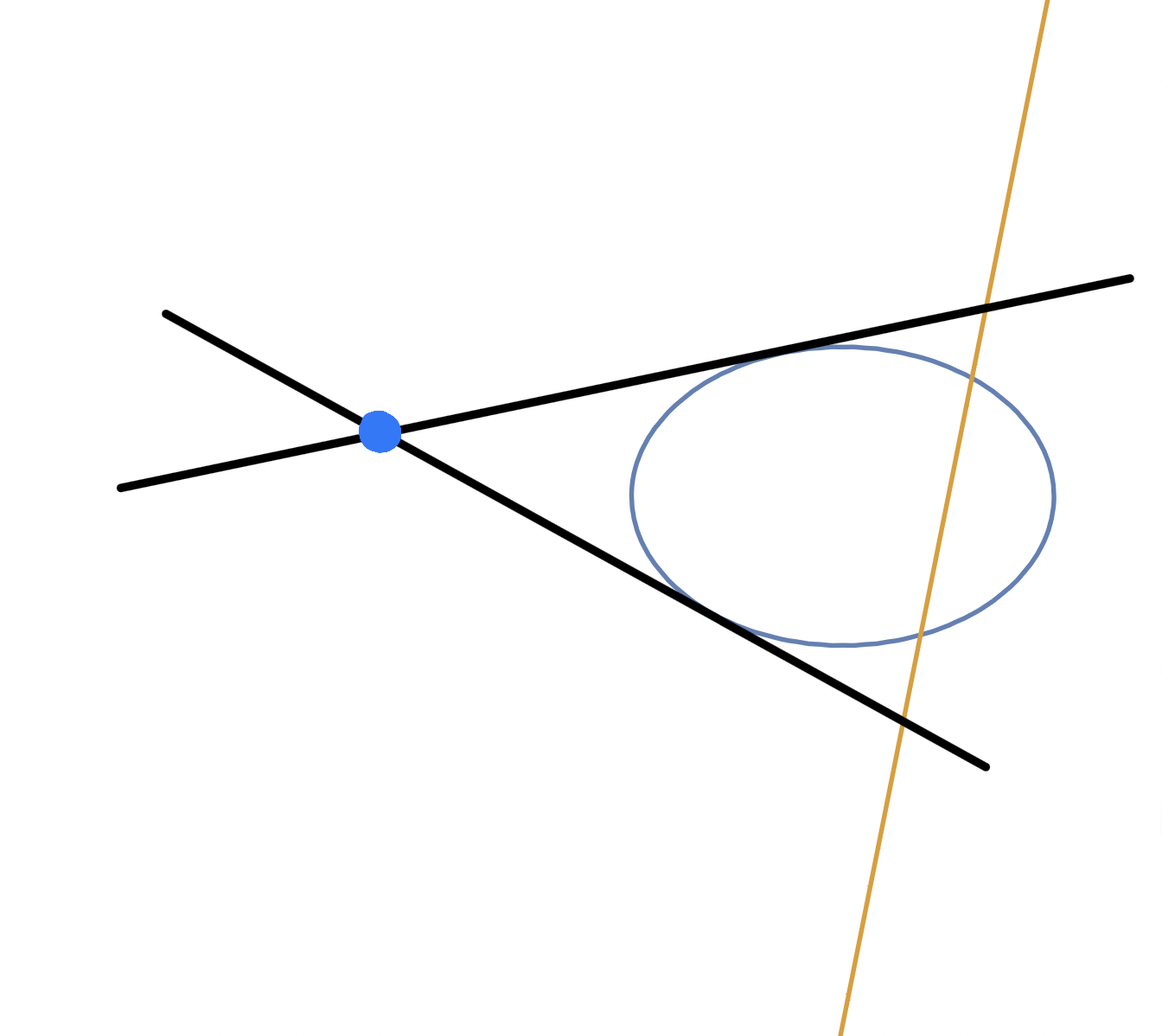}
\caption{The two lines passing through a point and meeting a conic, resp.\ a line, in a point of order 2, resp.\ 1.}
\label{lines}
\end{figure}

\end{example}

\section{Tropical correspondence and the invariants $N_d(\bbP^2,D^{\rm toric})$}

The Looijenga pair $(\bbP^2,D^{\rm toric})$ is toric, thus is its own toric model, and we can compute $N_d(\bbP^2,D^{\rm toric})$ by enumerating weighted tropical curves in the fan $\Xi$ of $(\bbP^2,D^{\rm toric})$.

\begin{defn}
A degree $d$ genus 0 maximal tangency tropical curve $\Gamma$ in $\Xi$ is a graph $\Gamma$ that may have unbounded edges such that:
\begin{enumerate}
\item The vertices $v$ of $\Gamma$ are elements of the support of $\Xi$ separated by edges $e$, which are line segments of rational slope $\in\bbZ^2$ that carry weights $w(e)\geq1$.
\item Balancing condition: For a vertex $v$ of $\Gamma$ with adjacent edge $e$, denote by $u_{(v,e)}$ its primitive outgoing vector. Weighted by $w$, these sum to 0:
\[
\sum_{v\in e} w(e) \, u_{(v,e)}=0.
\]
\item Writing $D^{\rm toric}=T_1+T_2+T_3$ for the toric divisors $T_i$, $\Gamma$ has exactly one unbounded edge of weight $d$ parallel to the ray of $\Xi$ corresponding to $T_i$, and no other unbounded edges.
\item The first Betti number $b_1(\Gamma)=0$.
\end{enumerate}
\end{defn}

For our purposes, a tropical curve can be thought of as keeping track of the tangency behaviour of stable log maps.

\begin{defn}
For a trivalent vertex $v\in \Gamma$ with outgoing edges $e_1,e_2,e_3$, the weight of $v$ is
\[
w(v):=w(e_i) \, w(e_j) \, \big{|} \det \left( u_{(v,e_i)} \, u_{(v,e_j)} \right) \big{|} 
\]
where $i\neq j$ and $\{i,j\}\subset\{1,2,3\}$. This is well-defined by the balancing condition.
The weight of $\Gamma$, $w(\Gamma)$, is given by the products of the weights of each trivalent vertex.

\end{defn}

\begin{proposition}
\label{prop1}
Fixing two general points $P_1,P_2\in\Xi$, there is only one genus 0 degree $d$ maximally tangent tropical curve in $\Xi$ that passes through $P_1$ and $P_2$. It carries weight $d^2$ and hence
\[
N_d(\bbP^2,D^{\rm toric})=d^2.
\]

\end{proposition}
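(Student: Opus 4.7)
The plan is to invoke the tropical correspondence theorem of \cite{Mikh05,NiSi,ManRu} to express $N_d(\bbP^2,D^{\rm toric})$ as the weighted count of degree $d$ genus $0$ maximally tangent tropical curves in $\Xi$ through $P_1,P_2$, and then exhibit the unique such curve together with its weight.

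First, I would analyse the combinatorial shape. The fan $\Xi$ has primitive ray generators $u_1=(1,0)$, $u_2=(0,1)$, $u_3=(-1,-1)$, and by the definition $\Gamma$ has exactly three unbounded edges, one in each $u_i$-direction with weight $d$. The two point insertions at $P_1,P_2$ are realised as bivalent vertices lying on edges of $\Gamma$. Writing $V_3$ for the number of trivalent vertices and $V_2=2$ for the number of bivalent ones, the tree condition $b_1(\Gamma)=0$ gives $V_3+V_2=E_f+1$ for the bounded-edge count $E_f$, while the half-edge sum gives $3V_3+2V_2=2E_f+3$. Solving forces $V_3=1$ and $E_f=2$. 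Hence $\Gamma$ is a tripod: a single trivalent vertex $v$ from which three rays emanate, two of them passing through a bivalent point-vertex before reaching infinity. Balancing at $v$, together with the prescribed directions and weights of the three unbounded edges (and the fact that a bivalent vertex preserves direction and weight), forces the three edges out of $v$ to be in the directions $u_1,u_2,u_3$ each of weight $d$; the identity $d(u_1+u_2+u_3)=0$ confirms balancing.

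Next, I would establish existence and uniqueness of $\Gamma$ through $P_1,P_2$. There are six a priori ways to assign the two points to two of the three legs. For each, $v$ is cut out uniquely by the pair of linear equations demanding that the $u_i$-leg starting at $v$ pass through the assigned $P_i$. The open condition of non-negative edge lengths subdivides the space of pairs $(P_1,P_2)$ into six open chambers, and for generic $P_1,P_2$ exactly one of the six assignments yields admissible positive edge lengths, so the tropical curve $\Gamma$ exists and is unique.

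Finally, the weight: since bivalent vertices contribute trivially, $w(\Gamma)=w(v)=d\cdot d\cdot|\det(u_1,u_2)|=d^2$ (any pair of $u_i$'s yields the same factor, consistent with the balancing). The tropical correspondence then gives $N_d(\bbP^2,D^{\rm toric})=d^2$. The main obstacle I anticipate is the combinatorial enumeration step, namely ruling out graphs with higher-valent vertices or subtler topology, and matching the multiplicity normalisation of the tropical correspondence to the vertex/edge weights used here.
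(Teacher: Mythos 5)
Your proposal is correct and follows essentially the same route as the paper: the paper's proof simply exhibits the unique tropical tripod (a single trivalent vertex with three legs of weight $d$ in the directions of $T_1,T_2,T_3$, two of which carry the point conditions), reads off the vertex weight $d^2$, and invokes the tropical correspondence of \cite{Mikh05,NiSi,ManRu}. The only difference is that you supply the valence/Euler-characteristic count and the chamber argument justifying uniqueness, which the paper leaves implicit in its picture.
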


\begin{proof}
The only such tropical curve is given in the following diagram. It has weight $d^2$. The result follows from the tropical correspondence \cite{Mikh05,NiSi,ManRu}.

\begin{center}
\begin{tikzpicture}[smooth, scale=1.2]
\draw[step=1cm,gray,very thin] (-2.5,-2.5) grid (2.5,2.5);
\draw (0,0) to (-2.5,0);
\draw (0,-2.5) to (0,0);
\draw (0,0) to (2.5,2.5);
\draw[thick] (1,-1) to (-2.5,-1);
\draw[thick] (1,-2.5) to (1,-1);
\draw[thick] (1,-1) to (2.5,0.5);
\node at (-0.25,-1.8) {$T_2$};
\node at (1.4,1.7) {$T_3$};
\node at (-1.7,0.2) {$T_1$};
\node at (-1,-1) {$\bullet$};
\node at (1,-2) {$\bullet$};
\node at (-0.75,-0.8) {$P_1$};
\node at (1.25,-1.8) {$P_2$};
\node at (1.2,-2.3) {$d$};
\node at (-2.3,-0.8) {$d$};
\node at (2.4,0.2) {$d$};
\node at (0.9,-0.8) {$d^2$};
\end{tikzpicture}
\end{center}
\end{proof}

\section{Degeneration and the invariants $N_d(\bbP^2,D_1+D_2)$}

\subsection{The toric model}

Denote by $p$ one of the two points of intersection of $D_1$ and $D_2$ and denote by $L$ the line tangent to $D_2$ at $p$. I describe a log modification  that is obtained by successively blowing up points which are dimension 0 strata of the (total transform) of the divisor $D=D_1+D_2$. 
I choose $D_1$ and $L$ to be toric divisors and describe a series of modifications that turn $D_2$ into a toric divisor as well (of a different surface). Every blow-up and blow-down is toric.

\begin{convention}
I denote by the same letter divisors on different surfaces which are related by strict transforms under blow-ups and blow-downs. This process alters the self-intersection numbers.
\end{convention}

Blow up $p$ which leads to the exceptional divisor $F_1$. Then blow up the intersection of $F_1$ with $D_2$ and write $F_2$ for the exceptional divisor. This gives a log Calabi-Yau surface $(\wt{\bbP^2},\wt{D})$, where $\wt{D}$ is the total transform of $D_1+D_2$, see Figure \ref{fig:logmodif}, and where $H$ denotes the pullback of the hyperplane class of $\bbP^2$.

\begin{figure}[h]
\begin{tikzpicture}[smooth, scale=1.2]
\draw[step=1cm,gray,very thin] (-2.5,-2.5) grid (2.5,2.5);
\draw[thick] (0,0) to (-2.5,0);
\draw[thick] (0,-2.5) to (0,2.5);
\draw[thick] (0,0) to (2.5,2.5);
\draw[thick] (0,0) to (-2.5,2.5);
%
\node at (1.3,1.7) {$D_1$};
\node at (-1.3,1.7) {$F_2$};
\node at (-0.3,1.7) {$F_1$};
\node at (-1.7,0.3) {$L$};
\node at (0.3,-1.7) {$H$};
\end{tikzpicture}
\hspace{0.3cm}
\includegraphics[scale=0.12]{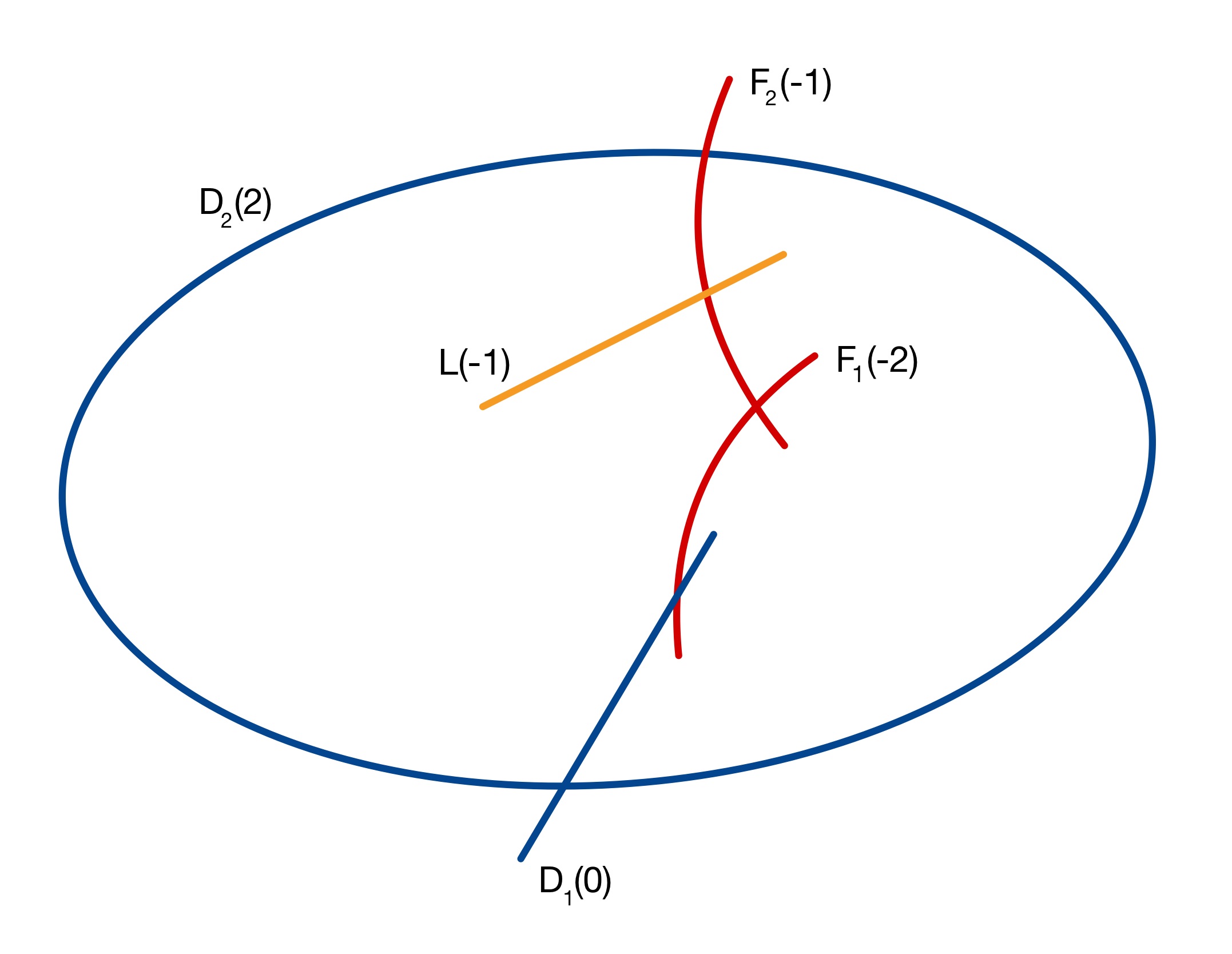}
\caption{The log modification $(\wt{\bbP^2},\wt{D})$.}
\label{fig:logmodif}
\end{figure}

\begin{proposition}
\label{prop:chow1}
The Chow group ${\rm CH}_1(\wt{\bbP^2})$ is generated by $[H]$, $[F_1]$ and $[F_2]$ with relations
\[
[D_1]=[L]+[F_2], \quad [H] = [D_1] + [F_1] + [F_2], \quad [D_2]=[H]+[L],
\]
\[
[H]\cdot [D_1] =  [H] \cdot [L] = 1, \quad [L]^2 = -1, \quad [H]^2 = 1, \quad [D_1]^2=0, \quad [D_2]^2=2, \quad [D_2] \cdot [L] = 0.
\]
\end{proposition}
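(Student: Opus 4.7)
The plan is to realize $\wt{\bbP^2}$ as the iterated blow-up $\varphi = \varphi_1 \circ \varphi_2$, where $\varphi_1 : X_1 \to \bbP^2$ blows up $p$ with exceptional divisor $E_1$, and $\varphi_2 : \wt{\bbP^2} \to X_1$ blows up the point $q := E_1 \cap D_2$ (where here $D_2$ denotes its strict transform on $X_1$) with exceptional divisor $F_2$. Set $H := \varphi^*[\text{hyperplane}]$ and adopt the paper's convention that $F_1$ denotes the strict transform on $\wt{\bbP^2}$ of $E_1$. Standard results on Chow groups of blow-ups yield that $\{[H], [F_1], [F_2]\}$ is a $\bbZ$-basis of ${\rm CH}_1(\wt{\bbP^2})$ with pairings $H^2 = 1$, $F_2^2 = -1$, $F_1 \cdot F_2 = 1$, $H \cdot F_1 = H \cdot F_2 = 0$, and $F_1^2 = -2$ (the last because $E_1^2 = -1$ on $X_1$ while $\varphi_2$ blows up a smooth point of $E_1$).

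Next I would compute the classes $[D_1], [L], [D_2]$ on $\wt{\bbP^2}$. On $\bbP^2$ one has $[D_1] = [L] = [H]$ and $[D_2] = 2[H]$, and each of these divisors passes through $p$ with multiplicity one. The crucial observation is that after blowing up $p$, the strict transforms of $L$ and $D_2$ both meet $E_1$ at the point corresponding to the common tangent direction of $L$ and $D_2$ at $p$ — which is exactly $q$ — whereas the strict transform of $D_1$ meets $E_1$ at a different point (since $D_1$ is transverse to $D_2$ at $p$). Applying the blow-up formula (strict transform equals pullback minus the multiplicity at the center times the exceptional divisor) at each stage, I obtain on $\wt{\bbP^2}$
\[
[D_1] = H - F_1 - F_2, \qquad [L] = H - F_1 - 2F_2, \qquad [D_2] = 2H - F_1 - 2F_2.
\]

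The linear relations in the proposition then drop out by direct substitution: $[D_1] - [L] = F_2$, $[D_1] + [F_1] + [F_2] = H$, and $[H] + [L] = 2H - F_1 - 2F_2 = [D_2]$. Every intersection number follows from bilinear expansion using the basis pairings above; for instance $[L]^2 = 1 - 2 - 4 + 4 = -1$, $[D_1]^2 = 1 - 2 - 1 + 2 = 0$, $[D_2]^2 = 4 - 2 - 4 + 4 = 2$, and $[D_2]\cdot [L] = 2 - 2 - 4 + 4 = 0$, while $[H]\cdot [D_1]$ and $[H]\cdot [L]$ are immediate.

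The main obstacle — or rather the one genuine geometric input — is the identification that the strict transform of $L$ meets $E_1$ precisely at $q$ (and not at some other point of $E_1$). This reflects the tangency of $L$ to $D_2$ at $p$, and it is exactly what forces the coefficient $-2F_2$ rather than $-F_2$ in $[L]$ and thereby propagates to the self-intersection $[L]^2 = -1$ and to the vanishing $[D_2]\cdot [L] = 0$. Everything else in the proposition is routine bookkeeping with the blow-up formula in Chow theory.
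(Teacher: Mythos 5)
Your proposal is correct, and it is a fully worked-out version of the paper's (one-line) proof ``immediate by construction'': you correctly identify the one genuine geometric input, namely that the strict transforms of $L$ and $D_2$ meet the first exceptional divisor at the same point because of the tangency at $p$, which is what produces the coefficient $-2F_2$ in $[L]$ and $[D_2]$, and all the stated relations and intersection numbers then follow from the blow-up formula exactly as you compute them. The paper's alternative justification is to read the same relations off the toric fan of $\wt{\bbP^2}$ in Figure \ref{fig:logmodif} (where $H$, $L$, $D_1$, $F_1$, $F_2$ are all toric divisors), but the content is identical.
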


\begin{proof}
Immediate either by construction or by the toric relations determined by the fan.
\end{proof}

The toric model $(\ol{\ptwo},\ol{D})$ then is given by blowing down the strict transform of $L$, which is a $(-1)$-curve, resulting in $\ol{\ptwo}=\ftwo$. This leads to the fan on the left-hand side of Figure \ref{fig:toricmodel}. Then $\ol{D}=D_1 + F_1 + F_2 + D_2$ with $F_1$ the $(-2)$-curve, $D_2$ a 2-curve, and $D_1,F_2$ the toric fibres of the bundle projection $\ftwo\to\bbP^1$.
Applying the fan automorphism
\[
\begin{pmatrix}1 & 0 \\ 1 & 1\end{pmatrix} \in {\rm SL}(2,\bbZ)
\]
we obtain the fan on the right hand side of Figure \ref{fig:toricmodel}.
In addition, I label the boundary divisors with their self-intersections and I keep track of $L$ by adding a $\times$ on the ray of $F_2$ indicating that we blow up a smooth point of $F_2$ in order to obtain a log Calabi--Yau surface logarithmically equivalent to $(\bbP^2,D_1+D_2)$.

\textbf{N.B.:} Through this process, $D_2$ became a \emph{toric} divisor.

\begin{figure}[h]
\begin{tikzpicture}[smooth, scale=1.2]
\draw[step=1cm,gray,very thin] (-2.5,-2.5) grid (2.5,2.5);
\draw[thick] (0,-2.5) to (0,2.5);
\draw[thick] (0,0) to (2.5,2.5);
\draw[thick] (0,0) to (-2.5,2.5);
\node at (-0.3,-1.7) {$D_2$};
\node at (1.3,1.7) {$D_1$};
\node at (-1.3,1.7) {$F_2$};
\node at (-0.3,1.7) {$F_1$};
\node at (-1.7,1.7) {$+$};
\node at (3.7,0) {$\longrightarrow$};
\node at (3.6,0.8) {$\cdot\begin{pmatrix}1 & 0 \\ 1 & 1\end{pmatrix}$};
\end{tikzpicture} \hspace{0.8cm}
\begin{tikzpicture}[smooth, scale=1.2]
\draw[step=1cm,gray,very thin] (-2.5,-2.5) grid (2.5,2.5);
\draw[thick] (0,0) to (-2.5,0);
\draw[thick] (0,-2.5) to (0,2.5);
\draw[thick] (0,0) to (1.25,2.5);
\node at (-0.55,-1.7) {$D_2(2)$};
\node at (1.5,1.7) {$D_1(0)$};
\node at (-0.5,1.7) {$F_1(\text{-}2)$};
\node at (-1.5,0.3) {$F_2(0)$};
\node at (-2.2,0) {$\times$};
\end{tikzpicture}
\caption{The toric model of $(\bbP^2,D_1+D_2)$.}
\label{fig:toricmodel}
\end{figure}
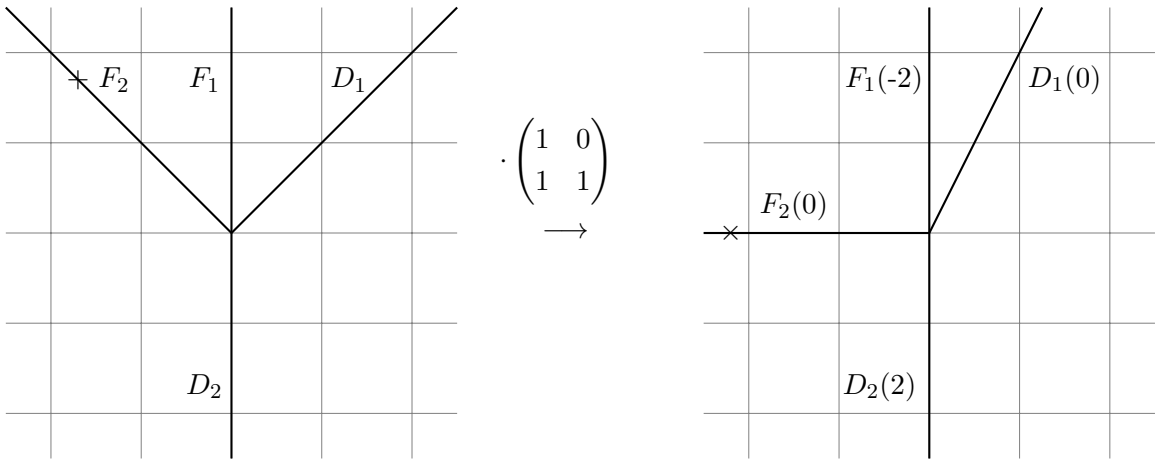

\begin{rem} Note that the fan of a smooth toric surface is uniquely determined by the self-intersections of its toric divisors. Indeed, if $\rho_1,\rho_2,\rho_3$ are primitive generators of successive rays in the fan corresponding to toric divisors $T_1,T_2,T_3$, then $\rho_1+(T_2)^2\rho_2+\rho_3=0$. The collection of these equations determines the fan up to multiplication by an element of ${\rm SL}(2,\bbZ)$.

\end{rem}

\subsection{Calculation by degeneration}
\label{sec:deg}

By invariance of log Gromov-Witten invariants under log modifications,
$N^{\rm log}_d(\bbP^2,D_1+D_2)$ is the genus 0 degree $d[H]$ log Gromov-Witten invariant of $(\wt{\bbP^2},\wt{D})$, passing through one interior point, meeting $D_1$ in one point of tangency $d$ and meeting $D_2$ in one point of tangency $2d$ and with no other conditions. 

I describe a suitable degeneration of $(\wt{\bbP^2},\wt{D})$ into two components. One component, $(\ol{\bbP^2},\ol{D})$, is toric and thus tropical correspondence results apply. $L$ specialises into the other component, which however will be simple enough so that we can compute its Gromov--Witten invariants.

Start with the trivial family $\bbF_2\times\bbA^1$. The blow up (degeneration to the normal cone)
\[
\shX':={\rm Bl}_{F_2\times\{0\}} \left( \bbF_2\times\bbA^1 \right)
\]
has general fibre $\bbF_2$ and special fibre $\bbF_2\cup_{F_2}\bbF_0$, where $\bbF_0=\bbP^1\times\bbP^1=\bbP(N_{F_2/\bbF_2}\oplus \mathcal{O}_{F_2})$. Denote by $H_1, H_2$ the two effective curve classes generating $\hhh_2(\bbF_0,\bbZ)$. $\bbF_2$ is glued to $\bbF_0$ by identifying $F_2$ with one of the toric divisors of $\bbF_0$ of class $H_1$. Denote by $\mathcal{F}$ the strict transform of $F_2\times\bbA^1$. Choose a section $\mathcal{S}$ of $\mathcal{F}\to\bbA^1$ whose image lies in the smooth locus (interior) of $F_2$. Then
\[
\shX := {\rm Bl}_{\mathcal{S}} \, \shX'
\]
has general fibre $(\wt{\bbP^2},\wt{D})$ and special fibre $\bbF_2\cup_{F_2}Y$, where $Y={\rm Bl}_{pt}(\bbF_0)$ is the blow-up of $\bbF_0$ at a smooth point of the other toric divisor in class $H_1$. If $\mathcal{L}$ is the exceptional divisor of the family $\shX$, by abuse of notation I denote by $L$ the fibre-wise exceptional divisors.

\begin{figure}[h]
\includegraphics[scale=0.17]{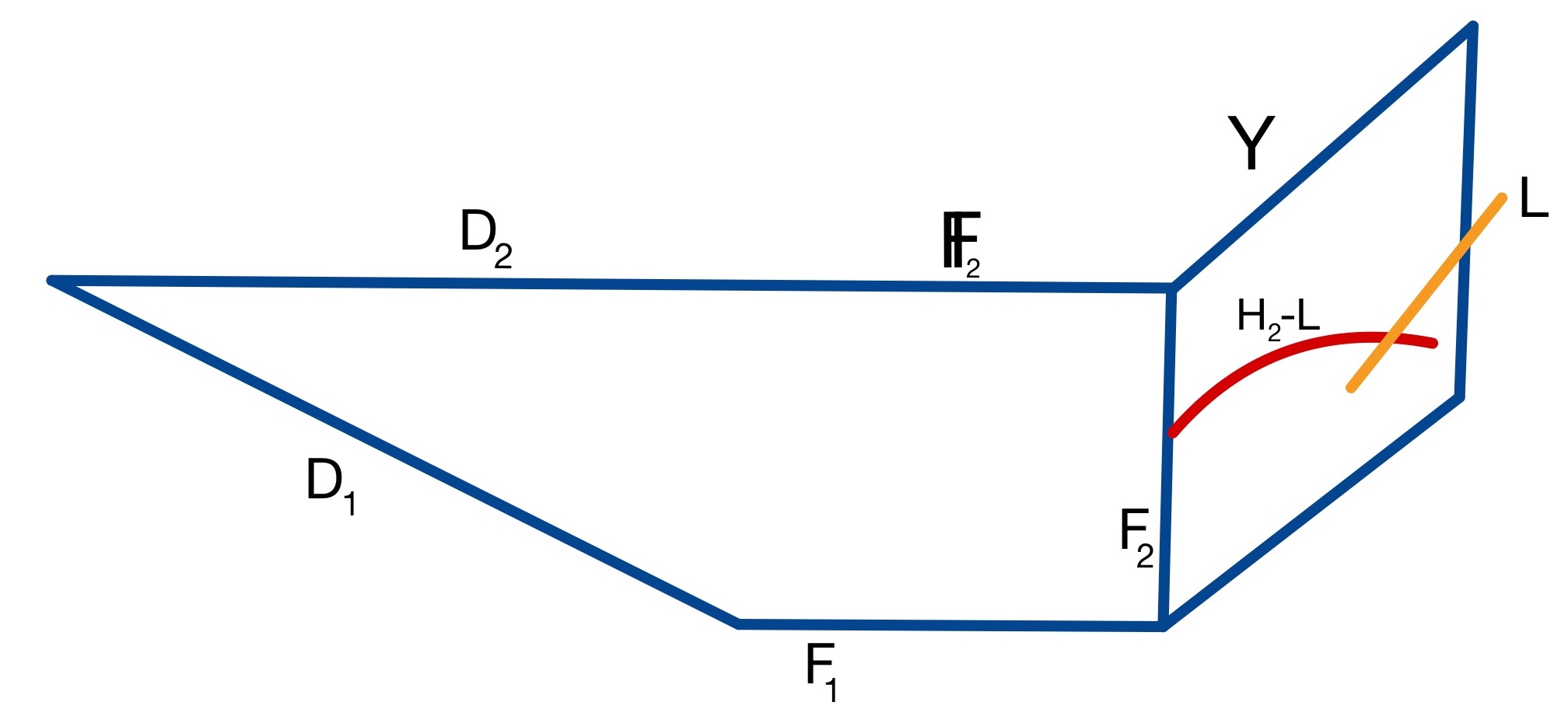}
\caption{The two components of the central fibre.}
\end{figure}

The degeneration formula is a virtual way of keeping track of counts of curves in the degeneration. The first step is to understand the behavior of curves as they specialise from the generic fibre to the special fibre. Start with a curve $C$ in $\wt{\bbP^2}$, or more generally an element in Chow $\alpha\in{\rm CH}_*(\wt{\bbP^2})$. Then \cite{BvBvG1,BvBvG2} construct the prelog Chow ring ${\rm CH}^{\rm prelog}_*(\bbF_2\cup_{F_2}Y)$ along with a \emph{specialisation morphism}
\[
\sigma : {\rm CH}_*(\wt{\bbP^2}) \to {\rm CH}^{\rm prelog}_*(\bbF_2\cup_{F_2}Y).
\]
The ring ${\rm CH}^{\rm prelog}_*(\bbF_2\cup_{F_2}Y)$ is defined as the quotient of tuples $(\alpha_1,\alpha_2)\in{\rm CH}_*(\bbF_2)\oplus{\rm CH}_*(Y)$ that satisfy $\alpha_1|_{F_2}=\alpha_2|_{F_2}$ under the identification of classes on both components which are equal as elements of ${\rm CH}_*(\bbF_2\cup_{F_2}Y)$. 

\begin{proposition}
The prelog Chow ring ${\rm CH}^{\rm prelog}_*(\bbF_2\cup_{F_2}Y)$ is generated by the classes
\begin{itemize}
\item $[(\bbF_2,Y)]$ in degree 2;
\item $[(D_2,H_2)]$, $[(F_2,0)]=[(0,H_1)]$ and $[(0,L)]$ in degree 1;
\item $[({\rm pt},0)]=[(0,{\rm pt})]$ in degree 0,
\end{itemize}
and with intersection products
\[[(D_2,H_2)]\cdot[(F_2,0)]=1, \quad [(D_2,H_2)]\cdot[(0,L)]=[(F_2,0)]\cdot[(0,L)]=0.
\]
\end{proposition}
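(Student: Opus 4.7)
The plan is to decompose the computation into three stages: determine the Chow groups of each component, identify compatible tuples via restriction to $F_2$, and apply the gluing identification to extract a minimal set of generators along with the intersection pairing.

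First, using the toric structure of $\bbF_2$ with toric divisors $D_1, F_1, F_2, D_2$ of self-intersections $0, -2, 0, 2$, one reads off the linear equivalences $[D_1] = [F_2]$ and $[D_2] = [F_1] + 2[F_2]$, so that $\mathrm{CH}_1(\bbF_2) = \ZZ[D_2] \oplus \ZZ[F_2]$. For $Y = \Bl_{\pt}\bbF_0$, the Chow group is generated freely by $[H_1]$, $[H_2]$, $[L]$ with the standard intersection form. Computing intersections with $F_2$ on each side gives the restriction maps to $\mathrm{CH}_0(F_2) = \ZZ[\pt]$: on $\bbF_2$, the classes $[D_2], [F_1]$ restrict to $[\pt]$ while $[D_1], [F_2]$ restrict to $0$; on $Y$, the class $[H_2]$ restricts to $[\pt]$ while $[H_1], [L]$ restrict to $0$, since $L$ lies on a different $H_1$-fiber and is therefore disjoint from $F_2$.

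Next I would enumerate compatible tuples $(\alpha, \beta)$ satisfying $i_{\bbF_2}^{\ast}\alpha = i_Y^{\ast}\beta$. In degree $1$, the zero-restriction compatible classes are spanned by $(F_2, 0)$, $(0, H_1)$, and $(0, L)$, while a single tuple with restriction $[\pt]$ such as $(D_2, H_2)$ suffices to complete a generating set, since any other such tuple differs from it by a zero-restriction class. Then I would apply the identification that classes equal in $\mathrm{CH}_\ast(\bbF_2 \cup_{F_2} Y)$ are identified in the prelog ring: the shared curve $F_2$ can be represented either in $\bbF_2$ as the fiber $F_2$ or in $Y$ as the glued $H_1$-class curve, so $[(F_2, 0)] = [(0, H_1)]$, and an analogous argument produces $[(\pt, 0)] = [(0, \pt)]$ together with the single degree-$2$ generator $[(\bbF_2, Y)]$.

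Finally, the intersection products follow by componentwise multiplication: $[(D_2, H_2)] \cdot [(F_2, 0)] = [D_2] \cdot [F_2] = 1$ in $\bbF_2$, while $[(D_2, H_2)] \cdot [(0, L)] = [H_2] \cdot [L] = 0$ in $Y$ and $[(F_2, 0)] \cdot [(0, L)] = [(0, H_1)] \cdot [(0, L)] = [H_1] \cdot [L] = 0$ after applying the identification. The hard part is precisely this last type of computation: $[(F_2, 0)]$ and $[(0, L)]$ naively live on disjoint components, and only after moving $[(F_2, 0)]$ across to $[(0, H_1)]$ can one pair them in $Y$. Verifying that the pairing so defined is independent of the choice of representative, so that it descends to the prelog quotient, is the essential technical input supplied by the construction of the prelog Chow ring in \cite{BvBvG1,BvBvG2}.
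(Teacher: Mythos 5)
Your proof is correct and follows essentially the same route as the paper's: compute the Chow groups of the two components, characterise the compatible tuples via their restrictions to the gluing curve $F_2$, identify $[(F_2,0)]$ with $[(0,H_1)]$ under the gluing, and evaluate the intersection products by choosing representatives on a common component. You simply make explicit several steps (the toric linear equivalences on $\bbF_2$, the restriction maps, and the need to move $[(F_2,0)]$ into $Y$ before pairing with $[(0,L)]$) that the paper leaves implicit.
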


\begin{proof}
The degrees 2 and 0 are clear.
In the case of curves, ${\rm CH}_1(\bbF_2)$ is generated by $D_2$ and $F_2$. ${\rm CH}_1(Y)$ is generated by $H_1$, $H_2$ and $L$. Moreover, the gluing $\bbF_2\cup_{F_2}Y$ identifies the classes $F_2$ and $H_1$. Then the classes are determined by the tuples that agree on the intersection between both components. Moreover, the intersections can be computed by choosing representatives on each of the components, see \cite[Definition 2.2]{BvBvG1}.
\end{proof}

\begin{proposition}

The specialisation of $[H] \in {\rm CH}_1(\wt{\bbP^2})$ is
\[
\sigma([H])= [(D_2, H_2-L)] \in {\rm CH}_1^{\rm prelog}(\bbF_2\cup_{F_2}Y).
\]

\end{proposition}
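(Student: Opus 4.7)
The plan is to use additivity of $\sigma$ and the relation $[H]=[D_2]-[L]$ from Proposition~\ref{prop:chow1} to reduce the claim to computing $\sigma([D_2])$ and $\sigma([L])$ separately, tracking a natural extension of each class through the successive blow-ups $\bbF_2\times\bbA^1\leftarrow\shX'\leftarrow\shX$ and reading off its restriction to the special fibre $\bbF_2\cup_{F_2}Y$.

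For $\sigma([D_2])$ I would start with the horizontal divisor $D_2\times\bbA^1\subset\bbF_2\times\bbA^1$. Since it meets the centre $F_2\times\{0\}$ of the first blow-up transversally at the single point $(D_2\cap F_2,0)$, its total and strict transforms in $\shX'$ coincide; a local coordinate calculation shows that the strict transform meets the exceptional $\bbF_0$ in the full $\bbP^1$-fibre of $\bbF_0\to F_2$ over $D_2\cap F_2$, i.e.\ in a curve of class $H_2$. Because $\mathcal{S}$ is chosen inside the smooth locus of $F_2$, the specialisation $\mathcal{S}|_0$ avoids this $H_2$, so the second blow-up $\shX\to\shX'$ leaves it untouched. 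Identifying the glued $H_1\subset Y$ with the zero section of $\bbF_0=\bbP(N_{F_2/\bbF_2}\oplus\shO_{F_2})$, the prelog condition reduces to $D_2\cdot F_2=1=H_2\cdot H_1$, yielding $\sigma([D_2])=[(D_2,H_2)]$.

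For $\sigma([L])$, the curve $L$ is by construction the fibre of the exceptional divisor $\mathcal{L}$ of $\shX\to\shX'$. Since $\mathcal{S}|_0$ lies in $\mathcal{F}|_0\subset\bbF_0$, the restriction $\mathcal{L}|_0$ is $L\subset Y$ with no component on $\bbF_2$; moreover $\mathcal{S}|_0$ sits on the infinity section $\mathcal{F}|_0$ rather than on the zero section glued to $\bbF_2$, so $L\cdot H_1=0$ on $Y$ and $[(0,L)]$ is indeed prelog. Thus $\sigma([L])=[(0,L)]$, and combining the two computations gives
\[
\sigma([H])=\sigma([D_2])-\sigma([L])=[(D_2,H_2)]-[(0,L)]=[(D_2,H_2-L)].
\]

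The main subtlety is bookkeeping on $Y$: one must carefully distinguish the two $H_1$-rulings on $\bbF_0$, namely the zero section (glued to $F_2\subset\bbF_2$) and the infinity section $\mathcal{F}|_0$ (carrying the blow-up point $\mathcal{S}|_0$), and verify that the general-position choice of $\mathcal{S}$ ensures that neither the strict transform of $D_2\times\bbA^1$ nor the $H_2$-fibre acquires unexpected exceptional corrections when passing from $\shX'$ to $\shX$.
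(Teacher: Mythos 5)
Your proof is correct, and it reaches the same answer by a genuinely different decomposition. The paper writes $[H]=[D_1]+[F_1]+[F_2]$ and specialises the three summands (getting $[(F_2,0)]$, $[(D_2,H_2)]-2[(F_2,0)]$ and $[(F_2,0)]-[(0,L)]$, which sum to $[(D_2,H_2-L)]$), whereas you use the relation $[D_2]=[H]+[L]$ from Proposition~\ref{prop:chow1} to write $[H]=[D_2]-[L]$ and specialise only two classes. Your choice is the more economical one, since $D_2$ and $L$ are exactly the classes with the cleanest behaviour in the family: $D_2\times\bbA^1$ misses the centre $\mathcal{S}$ of the second blow-up, and $L$ is by definition a fibre of the exceptional divisor $\mathcal{L}$ of $\shX\to\shX'$. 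You also supply the geometric justification (transversality of $D_2\times\bbA^1$ with $F_2\times\{0\}$, the exceptional $\bbP^1$ of the surface blow-up being the full $H_2$-fibre, and the position of $\mathcal{S}(0)$ on the section of $\bbF_0$ not glued to $\bbF_2$) that the paper compresses into ``by construction''; your two prelog checks $D_2\cdot F_2=1=H_2\cdot H_1$ and $0=L\cdot H_1$ are exactly right. The paper's route, on the other hand, records the specialisations of all three generators $[D_1],[F_1],[F_2]$, which is more information than needed here but useful if one wants $\sigma$ on all of ${\rm CH}_1(\wt{\bbP^2})$. The two computations are consistent: combining the paper's three formulas with $\sigma([L])=[(0,L)]$ reproduces your $\sigma([D_2])=[(D_2,H_2)]$.
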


\begin{proof}
By Proposition \ref{prop:chow1}, $[H] = [D_1] + [F_1] + [F_2]$.
By construction
\begin{align*}
\sigma([D_1])&=[(F_2,0)],\quad \sigma([F_1])=[(F_1,H_2)]=[(D_2,H_2)]-2[(F_2,0)], \\ \sigma([F_2])&=[(0,[H_1]-[L])]=[(F_2,0)]-[(0,L)].
\end{align*}
Since $\sigma$ is a ring homomorphism, we thus have that
\[
\sigma([H]) = \sigma([D_1]) + \sigma([F_1]) + \sigma([F_2]) = [(F_2+D_2-2F_2+F_2,H_2-L)]=[(D_2,H_2-L)].
\]
\end{proof}

\begin{rem}
One may explicitly check that $\sigma$ is a ring homomorphism, at least with respect to intersections with the class $[H]$. 
\end{rem}




Let $d\geq1$. Assume we have two genus 0 curves $C_1$ and $C_2$ in $\bbF_2$ and  $Y$ respectively, of classes $[C_1]=d[D_2]$ and $[C_2]=d([H_2]-[L])$. Then $C_1\cup C_2$ can be smoothed to the generic fibre if and only if $C_1$ and $C_2$ have the same intersection pattern with $F_2$, i.e.\ if and only if $C_1$ and $C_2$ meet $F_2$ at the same points with the same intersection multiplicities. This is called the \emph{pre-deformability} condition.

\begin{prop}
\label{prop2}
\[
N_d(\ptwo,D_1+D_2)=\binom{2d}{d}.
\]
\end{prop}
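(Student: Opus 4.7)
The plan is to apply the degeneration formula to the family $\shX$ constructed above in Section 4.2. The generic fibre is $(\wt{\bbP^2},\wt{D})$, which by log-invariance under the modification $\varphi$ carries the same maximal tangency log invariants as $(\bbP^2, D_1+D_2)$, while the central fibre is $\bbF_2 \cup_{F_2} Y$ with specialised curve class $d\,[(D_2, H_2-L)]$ as computed above. Schematically,
\[
N_d(\bbP^2, D_1+D_2) \;=\; \sum_{\mu} \frac{\prod_i \mu_i}{|\Aut(\mu)|} \, N^{\bbF_2}_\mu \cdot N^Y_\mu,
\]
where $\mu=(\mu_1,\ldots,\mu_n)$ ranges over ordered partitions of $d$ recording the intersection profile along the gluing divisor $F_2 \simeq H_1$, and where I would place the interior point constraint on the $\bbF_2$-side, since both imposed tangency points (along $D_1$ and $D_2$) live there.

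Next, for each $\mu$, I would compute $N^{\bbF_2}_\mu$ by the tropical correspondence of \cite{Mikh05,NiSi,ManRu}. Because $\bbF_2$ is toric with boundary $D_1+D_2+F_1+F_2$, this reduces to a weighted enumeration of rational tropical curves in the fan of $\bbF_2$ with unbounded edges of weight $d$ in direction $D_1$, weight $2d$ in direction $D_2$ and weights $\mu_1,\ldots,\mu_n$ in direction $F_2$, through one fixed generic point, counted with the vertex weights of Definition 3.2. For the $Y$-side invariant $N^Y_\mu$ on $Y = \Bl_{\pt}\bbF_0$, the class $d(H_2-L)$ forces every stable map to factor through the unique $H_2$-fibre of $\bbF_0$ passing through the blow-up point, so the invariant should reduce to a standard rubber/multicover calculation over $\bbP^1$; I expect $N^Y_\mu$ to vanish for most $\mu$ and to be concentrated on a very restricted branching profile.

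Finally, I would combine the two sides and simplify the resulting partition sum to $\binom{2d}{d}$. The main obstacle I anticipate is precisely this last step: matching the tropical vertex weights on the $\bbF_2$-side with the multicover contributions on the $Y$-side and then collapsing the sum over $\mu$ into a single closed-form binomial. A natural structural route is to repackage the entire calculation via the scattering-diagram algorithm of \cite{GPS}, which converts the degeneration into a commutator of wall-crossing automorphisms in the tropical vertex group; in that language the appearance of $\binom{2d}{d}$ should be a clean consequence of commuting the wall functions attached to $D_1$ and $D_2$. If such a structural argument proves elusive, an alternative is to extract generating functions and verify the identity directly, using the fact that $\binom{2d}{d}$ is the coefficient of $x^d$ in $(1-4x)^{-1/2}$, matching the $\mu$-sum to this series term by term.
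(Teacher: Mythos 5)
Your strategy coincides with the paper's: degenerate to $\bbF_2\cup_{F_2}Y$, place the point condition on the $\bbF_2$ side, compute the $\bbF_2$ contributions tropically and the $Y$ contributions as multiple covers of the unique $(-1)$-curve in class $H_2-L$, then assemble via the degeneration formula. As written, however, the proposal is a plan rather than a proof: all three substantive inputs are deferred ("I would compute", "I expect"). Concretely, what is missing is (a) the local contribution of a degree $\ell$ multiple cover of the $(-1)$-curve, which is $(-1)^{\ell-1}/\ell^2$ by \cite[Proposition 5.2]{GPS}; (b) the tropical count on the $\bbF_2$ side, where for each contact profile $m=(m_1,\dots,m_d)$ with $\sum_\ell \ell m_\ell=d$ there is exactly one tropical curve and $N_m(\F_2)=\prod_\ell (2d)^{m_\ell}$, the factor $1/\ell$ from fixing the contact points on $F_2$ cancelling against the vertex multiplicities $2d\ell$; and (c) the evaluation of the resulting sum
\[
\sum_{m\vdash d}\ \prod_{\ell=1}^{d}\frac{1}{m_\ell!}\left(2d\,\frac{(-1)^{\ell-1}}{\ell}\right)^{m_\ell},
\]
which is the coefficient of $x^d$ in $\exp\left(2d\log(1+x)\right)=(1+x)^{2d}$, i.e.\ $\binom{2d}{d}$. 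Without these, the proof is not yet there.

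A further caution on your proposed fallback for step (c): matching the $\mu$-sum term by term against the expansion of $(1-4x)^{-1/2}$ will not work directly. Although $\binom{2d}{d}$ is the coefficient of $x^d$ in that series, the degree $d$ enters every summand of the degeneration formula as a parameter (through the factor $2d$ from $N_m(\F_2)$), so the sum is naturally the $x^d$-coefficient of the $d$-dependent series $(1+x)^{2d}$, not of a single universal series. Your alternative of recasting the whole computation as a commutator of wall-crossing automorphisms in the tropical vertex group is viable and is essentially the packaging of \cite{GPS} used in \cite{BBvG2}, but it still requires the same local inputs (a) and (b) before the binomial coefficient emerges.
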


\begin{proof}

The degeneration formula \cite{MR1938113,zbMATH07283066,KLR,AF16,Chendeg} states that virtually counting genus 0 curves $C$ of class $d[H]\in{\rm CH}_1(\wt{\bbP^2})$ satisfying a list of properties is the same as virtually counting genus 0 curves $C_1$ and $C_2$ of class $([C_1],[C_2])=d[(D_2,H_2-L)]\in{\rm CH}^{\rm prelog}_1(\bbF_2\cup_{F_2}Y)$ that satisfy the pre-deformability condition as well as the specialisation of the list of properties. For us the list of properties is to meet $D_1$, resp. $D_2$, in one point of tangency $d$, resp. $2d$, and to pass through a point in the interior of $\wt{\bbP^2}$. Furthermore, the degeneration formula has an additional multiplicity factor that keeps track of the fact that several different curves in the generic fibre may specialise to the same curve in the special fibre.

As $(-1)$-curves admit no infinitesimal deformations, there is only one curve in $Y$ of class $H_2-L$. It is the strict transform of the fibre in class $H_2$ of $\bbF_0$ that meets the point $\mathcal{S}(0)\in\bbF_0$. As a consequence, the only contributions coming from $Y$ are unions of  multiple covers over this fibre.

The precise expression of the degeneration formula (see \cite[Proposition 5.3]{GPS}) is that
\[ N_d(\bbP^2,D_1+D_2) =\sum_{m \vdash d} \prod_{\ell=1}^{d} 
\frac{\ell^{m_\ell}}{m_\ell !} \left( \frac{(-1)^{\ell -1}}{\ell^2} \right)^{m_\ell}
N_{m}(\F_2). \]
\begin{itemize}
\item The sum is over the partitions $m=(m_1,\dots,m_{d})$ of $d$ so that $d=\sum_{\ell=1}^{d} \ell m_\ell$ with $m_\ell\geq0$ and the convention that $0!=1$. This keeps track of the intersection profile of $C_1$, equivalently $C_2$, along $F_2$. In this notation, $C_1$, equivalently $C_2$, has $m_\ell$ points of intersection multiplicity $\ell$ for $0\leq\ell\leq d$ ($m_\ell$ may be zero).
\item Since $H_2-L$ is a $(-1)$-curve, $C_2$ decomposes as a union of multiple covers of degree $\ell$ over $H_2-L$, each meeting $F_2$ in multiplicity $\ell$ at the unique point of intersection of $F_2$ with $H_2-L$. In order to lead to a genus 0 connected curve (more precisely a genus 0 connected stable map), these are glued to a single \emph{connected} curve (more precisely a connected stable map) $C_1$ in $\bbF_2$. For each $m$, there are $m_1+\cdots+m_{d}$ multiple covers over $H_2-L$ that are glued to $C_1$ according to the ramification profile determined by $m$.

\item The factors $\ell^{m_\ell}$ are the multiplicities coming from the degeneration formula. 
\item The $1/m_\ell !$ are the symmetry factors corresponding to the permutations of the $m_\ell$ multiple covers of degree $\ell$ over $H_2-L$. This adjusts for the over-counting that resulted from counting each constellation separately.
\item The $(-1)^{\ell -1}/\ell^2$ are the $Y$-contributions coming from degree $\ell$ covers of the fibre of class $H_2-L$, see \cite[Proposition 5.2]{GPS}. 
\item $N_m(\F_2)$ is the genus 0 log Gromov--Witten invariant of $\F_2$ of class $d[D_2]$, counting rational curves that pass through one fixed point in general position in $\F_2$ and have
\begin{itemize}
    \item one point of contact of order $2d$ with $D_2$,
    \item one point of contact of order $d$ with $D_1$,
    \item for every $1\leq \ell\leq d$, $m_\ell$ contact points of contact order $\ell$ with $F_2$, at the same fixed position on $F_2$ corresponding to the intersection of $F_2$ with the unique fibre of $Y$ of class $H_2-L$.
\end{itemize}
\end{itemize}
The invariant $N_m(\F_2)$ is computed tropically. See Figure \ref{fig:tropcurves} for $d=2$ and $m=(2,0)$.

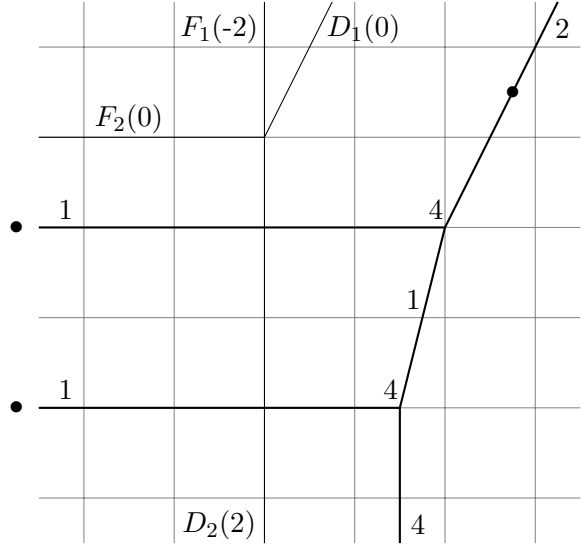
\begin{figure}[h]
\begin{tikzpicture}[smooth, scale=1.2]
\draw[step=1cm,gray,very thin] (-2.5,-3.5) grid (3.5,2.5);
\draw (0,1) to (-2.5,1);
\draw (0,-3.5) to (0,2.5);
\draw (0,1) to (0.75,2.5);
\node at (-0.5,-3.3) {$D_2(2)$};
\node at (1.1,2.2) {$D_1(0)$};
\node at (-0.5,2.2) {$F_1(\text{-}2)$};
\node at (-1.5,1.2) {$F_2(0)$};
\draw[thick] (1.5,-3.5) to (1.5,-2) to (2,0) to (3.25,2.5);
\draw[thick] (2,0) to (-2.5,0);
\draw[thick] (1.5,-2) to (-2.5,-2);
\node at (1.7,-3.3) {$4$};
\node at (-2.2,-1.8) {$1$};
\node at (-2.2,0.2) {$1$};
\node at (3.3,2.2) {$2$};
\node at (1.65,-0.8) {$1$};
\node at (1.9,0.2) {$4$};
\node at (1.4,-1.8) {$4$};
\node at (2.75,1.5) {$\bullet$};
\node at (-2.75,0) {$\bullet$};
\node at (-2.75,-2) {$\bullet$};
\end{tikzpicture}
\caption{For $d=2$, the only tropical curve with $m=(m_1,m_2)=(2,0)$. The numbers indicate the weights of the edges and vertices. The bullets indicate that we fix a point in the interior of $\bbF_2$ and twice the same point on $F_2$.}
\label{fig:tropcurves}
\end{figure}

There is only one tropical curve contributing to $N_m(\F_2)$. It has $m_1+\cdots+m_{\ell}+\cdots+m_{d}$ infinite edges coming from \emph{fixed} $F_2$-directions, grouped according to $\ell$. The $n$th such curve, for $0< m_1+\cdots+m_{\ell-1}< n \leq m_1+\cdots+m_{\ell}\leq d$ carries weight $\ell$ corresponding to the intersection multiplicity at that point. The tropical curve has one infinite edge for each of $D_1$ and $D_2$, of weights $d$ and $2d$. Requiring the curve to pass through a point in the fan, all other edges and multiplicites are determined by the balancing condition.
Then
\[
N_m(\F_2)=\prod_{\ell= 1}^{d}\left( \frac{1}{\ell} (2d \ell)\right)^{m_\ell}=\prod_{\ell=1}^{d}(2d)^{m_\ell}.
\]
The factors $2d\ell$ are the multiplicities of the vertices. The factors
$1/\ell$ come from the fact that the positions of the contact points with 
$\F_2$ are fixed.
Finally, we obtain
\[ N_d(\bbP^2(1,4)) =\sum_{m \vdash d} \prod_{\ell=1}^{d} 
\frac{1}{m_\ell !} \left( 2d \frac{(-1)^{\ell -1}}{\ell} \right)^{m_\ell} \,.\]
This is the coefficient of $x^d$ in the power series expansion of $\exp (2d \log(1+x))=(1+x)^{2d}$,
hence the result.
\end{proof}

\section{Scattering and the invariants $N_d(\bbP^2,D_3)$}
\label{sec:scat2}

We come to the Looijenga pair formed by the nodal cubic $D_3$. Denote by $P$ the point of self-intersection. First we get a toric model. To do so, blow up $P$ leading to an exceptional divisor $E$, and keep track of the two tangent lines $L_1$ and $L_2$. Blow up the intersections of $E$ with the proper transforms of both $L_1$ and $L_2$ and denote the two new exceptional divisors by $F_1$ and $F_2$.

The proper transforms of $L_1$ and $L_2$ are now $(-1)$-curves, which we may both blow down to obtain the Hirzebruch surface $\bbF_3$. This determines a scattering diagram \cite{GPS}, which is obtained from the fan of the Hirzebruch surface by adding two focus-focus singularities on the two toric fibres.

\begin{figure}[h]
\includegraphics[scale=0.11]{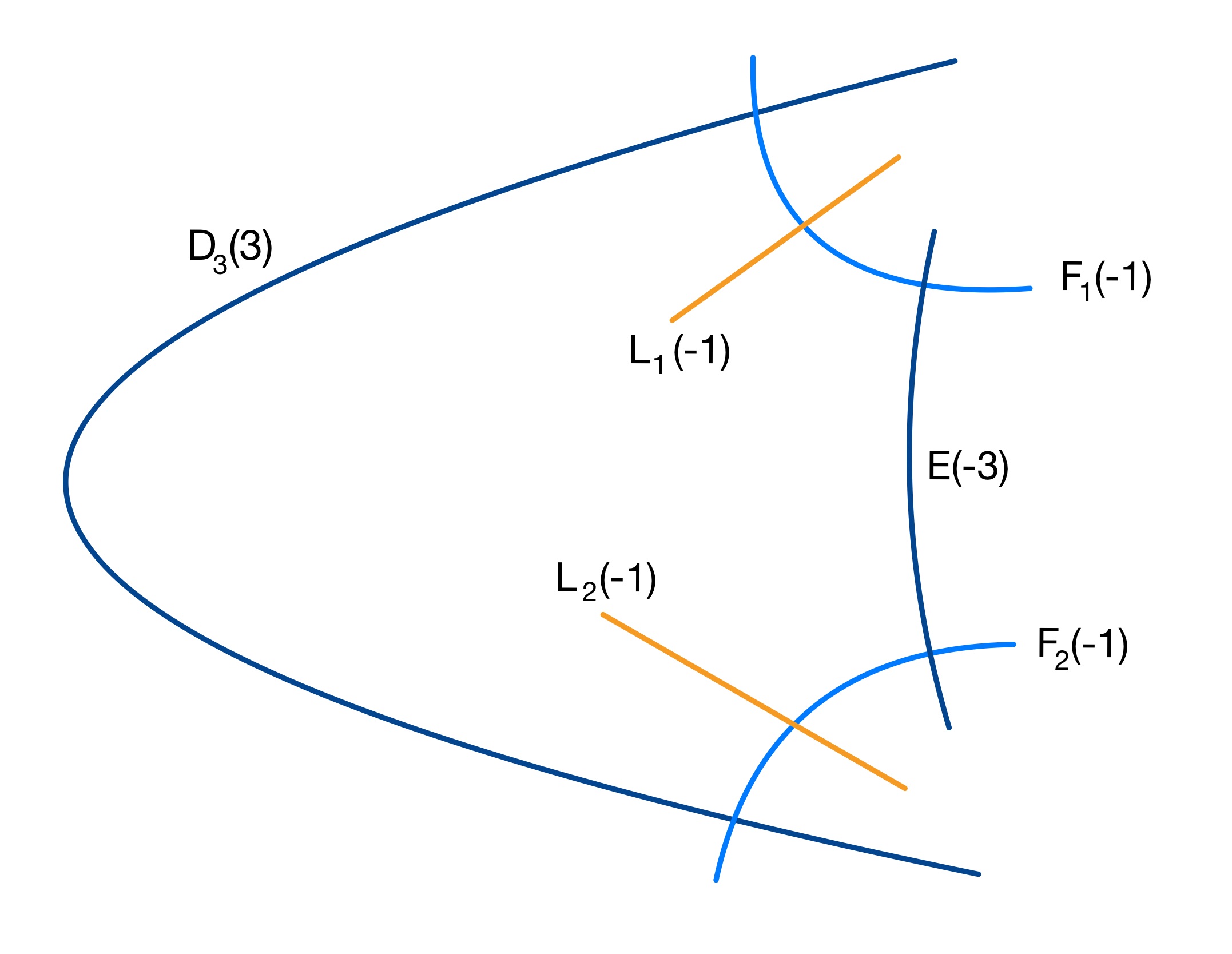}
\hspace{3mm}
\begin{tikzpicture}[smooth, scale=1.2]
\draw[<->] (-2.5,2) to (-1.5,2);
\draw[<->] (-2,1.5) to (-2,2.5);
\node at (-1.67,2.15) {$\scriptstyle{x}$};
\node at (-2.15,2.3) {$\scriptstyle{y}$};
\draw[step=1cm,gray,very thin] (-2.5,-2.5) grid (2.5,3.5);
\draw[thick] (2.5,0) to (-2.5,0);
\draw[thick] (0,-2.5) to (0,3.5);
\draw[thick] (0,0) to (1.16667,3.5);
\node at (-0.55,-1.7) {$D_3(3)$};
\node at (1.5,2.7) {$F_1(0)$};
\node at (-0.5,2.7) {$E(\text{-}3)$};
\node at (-1.5,0.3) {$F_2(0)$};
\node at (-2.3,0) {$\times$};
\node at (0.8,2.4) {$\times$};
\node at (-0.5,0.2) {$\sstyle{1+tx^{-1}}$};
\node[rotate=71.7] at (0.7,1.5) {$\sstyle{1+txy^{3}}$};
\end{tikzpicture}
\caption{The toric model of $(\ptwo,D_3)$.}
\label{fig:ptwonodalcubic}
\end{figure}

The two walls that meet are directed by vectors $v_1$ and $v_2$ that have $ | \det \left( v_1 \, v_2 \right) | =3\neq 1$. As a consequence, there is infinite scattering as described e.g.\ in \cite{GP10}. Each wall carries a wall-crossing function, which identifies the ring of functions on each side of the wall. Scattering is a process of inductively adding rays to the scattering diagram in order to make it consistent order by order. Consistency means that the sequence of automorphisms of a loop around the origin gives the identity up to a given order.

The rays are wall-crossing functions whose coefficients are log Gromov--Witten invariants as shown in \cite{GPS}.
The invariant $N_d(\bbP^2,D_3)$ has one point of contact with $D_3$. As a consequence, by the correspondence results of \cite{GPS} between wall-crossing functions and log Gromov--Witten invariants, $N_d(\bbP^2,D_3)$ is encoded by the wall-crossing function of the central ray of the scattering diagram leading to \eqref{solution3}.

\bibliography{miabiblio}

\end{document}